\documentclass[10pt,oneside]{amsart}
\usepackage[english]{babel}
\usepackage{amssymb}
\usepackage{amsthm}
\usepackage[foot]{amsaddr}
\usepackage{amsmath}
\usepackage{a4wide}
\usepackage{esvect}
\usepackage{hyperref}
\usepackage{verbatim}
\usepackage[dvips]{graphicx}
\usepackage{t1enc}
\usepackage{xcolor}
\usepackage[paperwidth=6.5in, paperheight=9.125in, margin=1in]{geometry}

\usepackage{tikz}
\usetikzlibrary{calc}
\tikzset{every picture/.style={line cap=none, thick} }
\usepackage{caption}
\usepackage{subcaption}
\captionsetup{justification=centering}

\newtheorem{thm}{Theorem}%[chapter]
\newtheorem{claim}[thm]{Claim}

\newtheorem{lemma}[thm]{Lemma}

\newtheorem{problem}[thm]{Problem}

\theoremstyle{definition}
\newtheorem{definition}[thm]{Definition}

\newcommand{\ignore}[1]{}

\begin{document}

\author{Ervin Gy\H ori}
\address[Ervin Gy\H ori]{Alfr\'ed R\'enyi Institute of Mathematics/Central European University}
\email[Ervin Gy\H ori]{gyori.ervin@renyi.mta.hu}
\thanks{Research  of the authors was supported by OTKA grant 116769.}

\author{Tam\'as R\'obert Mezei}
\address[Tam\'as R\'obert Mezei]{Central European University}
\email[Tam\'as R\'obert Mezei]{tamasrobert.mezei@gmail.com}

\author{G\'abor M\'esz\'aros}
\address[G\'abor M\'esz\'aros]{Alfr\'ed R\'enyi Institute of Mathematics}
\email[G\'abor M\'esz\'aros]{meszaros.gabor@renyi.mta.hu}

\title[Terminal-Pairability in Complete Graphs] % running head version
{Terminal-Pairability in Complete Graphs}

\begin{abstract}
 \linespread{1.3}\selectfont
 We investigate terminal-pairability properties of complete graphs and improve the known bounds in two open problems. We prove that the complete graph $K_n$ on $n$ vertices is terminal-pairable if the maximum degree $\Delta$ of the corresponding demand multigraph $D$ is at most $2\lfloor\frac{n}{6}\rfloor-4$. We also verify the terminal-pairability property when the number of edges in $D$ does not exceed $2n-5$ and $\Delta\leq n-1$ holds.
\end{abstract}
\maketitle

%%%%%%%%%%%%%%%%%%%%%%%%%%%%%%%%%%%%%%%%%%%%%%%%%%%%%%%%%%%%%%%
%					Introduction
%%%%%%%%%%%%%%%%%%%%%%%%%%%%%%%%%%%%%%%%%%%%%%%%%%%%%%%%%%%%%%%

\begin{center}
  \itshape Dedicated to the memory of our friend, professor Ralph Faudree.
\end{center}

\section{Introduction}

We discuss a graph theoretic concept of {\itshape terminal-pairability\/} emerging from a practical networking problem introduced by Csaba, Faudree, Gy\'arf\'as, Lehel, and Shelp~\cite{CS} and further studied by Faudree, Gy\'arf\'as, and Lehel~\cite{mpp,F,pp} and by Kubicka, Kubicki and Lehel~\cite{grid}. We revisit two open problems presented in~\cite{CS} and~\cite{grid}. Let $G$ be a graph
%with maximum degree $\Delta$ and
with vertex set $V(G) = T(G)\cup I(G)$ where the set $T(G)$ consists of $t$ ($t$ even) vertices of degree 1. We call $G$ a {\itshape terminal-pairable\/} network if for any pairing of the vertices of $T(G)$ there exist edge-disjoint paths in $G$ between the paired vertices. $T(G)$ is referred to as the set of {\itshape terminal nodes\/} or {\itshape terminals\/} and $I(G)$ is called the set of interior nodes of the network. Given a particular pairing of the terminals, the pairs of terminals in the pairing are simply called {\itshape pairs}. For an inner vertex $v$ we denote the number of terminal and interior vertices incident to $v$ by $d_{T(G)}(v)$ and $d_{I(G)}(v)$, respectively.

In a terminal-pairable network pairs of vertices of a graph are to be connected with edge-disjoint paths, thus the notion is clearly related to multicommodity flow problems. The concept is also related to  weakly-linked (in our case weakly-$t/2$-linked) graphs: a graph $G$ is {\itshape weakly $k$-linked\/} if, for
every pair of $k$-element sets, $X = \{x_1,\dots,x_k\}$ and
$Y = \{y_1,\dots,y_k\}$, there exist edge-disjoint
paths $P_1,\dots,P_k$, such that each $P_i$ is an $x_i y_i$-path. Observe that joining terminal vertices (leaves) to the vertices of a weakly-$k$-linked graph $G$ results in a terminal-pairable graph as long as every vertex of $G$ receives at most $k$ terminals. On the other hand, note that terminal-pairable graphs are not necessarily highly-weakly-linked. The stars (complete bipartite graph $K_{1,n}$ where one class is formed by a singleton) give a very illustrative example of terminal-pairable graphs with many terminal vertices that are not even weakly-2-linked.

Given a terminal-pairable network $G$ with a particular pairing $\mathcal{P}$ of the terminals the demand multigraph $D=(V(D),E(D))$ is defined as follows: we set $V(D)=I(G)$ and join two vertices $u,v\in V(D)$ by as many copies of the edge $uv$ as there are pairs of terminals in $\mathcal{P}$ s.t.~one vertex of the pair is joined to $u$ and the other is joined to $v$ in $G$. Obviously, $|E(D)|=\frac{|T(G)|}{2}$ and $d_D(v)= d_{T(G)}(v)$ for every $v\in V(D)$, thus in fact $\Delta(D)= \max\{ d_{T(G)}(v)\ |\ v\in I(G)\}$.
For convenience, demand multigraphs are referred to simply as demand graphs from now on.

Observe that a terminal pairing problem is fully described by the underlying network $G$ and the demand graph $D$. We call the process of substituting the demand edges by disjoint paths in $G$ the {\itshape resolution of the demand graph}.

Given a simple graph $G$, one central question in the topic of terminal-pairability is the maximum value of $t$ for which an arbitrary extension of $G$ by $t$ terminal nodes results in a terminal-pairable graphs. As at a given vertex $v\in I(G)$ at most $d_{I(G)}(v)$ edge-disjoint paths can start, the minimum degree $\delta_{I(G)}$ of the graph induced by the interior vertices provides an obvious upper bound on $t$. However, with a balanced placement of the terminals with restriction on $\Delta(D)$ of the corresponding demand graph (resembling the structure of weakly-linked graphs), the $\delta_{I(G)}$ bound on the extremal value of $t$ can be greatly improved.

Csaba, Faudree, Gy\'arf\'as, Lehel, and Shelp~\cite{CS} studied above extremal value for the complete graph $K_n$ and investigated the following question:

\begin{problem}[\cite{CS}] Let $K_n^q$ denote the graph obtained from the complete graph $K_n$ ($n$ even) by adding $q$ terminal vertices to every initial vertex. What is the highest value of $q$ (in terms of $n$) for which $K_n^q$ is terminal-pairable?
\end{problem}
One can easily verify that the parameter $q$ cannot exceed $\frac{n}{2}$. Indeed, take the demand graph $D$ obtained by replacing  every edge in a one-factor on $n$ vertices by $q$ parallel edges. In order to create edge-disjoint paths most paths need to use at least two edges in $K_n$, thus a rather short calculation implies the indicated upper bound. The so far best result on the lower bound is due to Csaba, Faudree, Gy\'arf\'as, Lehel, and Shelp:

\begin{thm}[Csaba, Faudree, Gy\'arf\'as, Lehel, Shelp~\cite{CS}]
If $q\leq \frac{n}{4+2\sqrt{3}}$, then $K_n^q$ is terminal-pairable.
\end{thm}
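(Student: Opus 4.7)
The statement reduces to showing that every demand multigraph $D$ on $V(K_n)$ with $\Delta(D)\le q \le n/(4+2\sqrt 3)$ can be realized by edge-disjoint paths in $K_n$. My plan is to route each demand edge by a path of length one or two: an edge $uv$ of $D$ is assigned either the direct edge $uv\in E(K_n)$ or a length-two path $u$--$w$--$v$ through an intermediate vertex $w$.

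First I would handle the simple part of $D$: extract a simple subgraph $D_0 \subseteq D$ containing one copy of each edge that appears in $D$, and assign every edge of $D_0$ its direct realization in $K_n$. Because $D_0$ is simple, no two of these direct edges collide, so this step consumes exactly $|E(D_0)|$ edges of $K_n$ without conflict.

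The crux of the proof is resolving the multiplicity surplus $D' := D - D_0$ by length-two paths, one per edge of $D'$. For each $uv \in E(D')$ a midpoint $w\notin\{u,v\}$ must be chosen so that neither $uw$ nor $wv$ has been used so far (by $D_0$ or by a previously assigned midpoint). I would approach this by a greedy/probabilistic algorithm: process the edges of $D'$ in some order and, for each $uv$, choose a midpoint uniformly at random among the currently free candidates, declaring failure if this candidate set is empty. The degree bound $\Delta(D)\le q$ translates into a lower bound on the size of the candidate set at each step, and a union bound over the failure events shows that the whole procedure succeeds with positive probability once $q/n$ is sufficiently small. A derandomized variant casts the same question as a bipartite assignment between $E(D')$ and the free pairs $\{uw,wv\}$ in $K_n\setminus E(D_0)$, and looks for an edge-disjoint system of representatives by Hall's theorem.

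The main obstacle, and the source of the curious constant $4+2\sqrt 3=(\sqrt 3 + 1)^2$, is the tight tracking of the per-step blockage. A failure event at a demand edge $uv$ occurs when every candidate midpoint $w$ has had $uw$ or $wv$ consumed by an earlier step. The degree bound $q$ limits how many previously processed demand edges can be incident to $u$ or $v$, while the $n-2$ potential midpoints provide the raw pool; equating the expected blockage at the two endpoints of $uv$ against this pool produces a quadratic inequality in $q/n$ whose borderline solution is $1/(\sqrt 3 + 1)^2$. The delicate step is ensuring that this per-step estimate survives the worst-case demand configuration---typically realized by vertex-disjoint heavy multi-edges placed on a near-perfect matching, where both endpoints of a processed pair are maximally loaded---and, if a deterministic proof is desired, carrying out an extremal analysis of obstructing configurations that upgrades the aggregate capacity inequality to a genuine Hall condition on the auxiliary bipartite assignment graph.
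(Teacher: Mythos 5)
First, a point of order: the paper does not actually prove this statement --- it is quoted from~\cite{CS} as the previously best bound, and the authors' own contribution (Theorem~\ref{delta_n/3}) is an improvement of it. So there is no in-paper proof to compare yours against, and your attempt has to be judged on its own merits.

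As it stands, your argument has a genuine gap at exactly the point you flag as ``delicate'': nothing in the write-up actually produces the constant $4+2\sqrt{3}$, and the accounting you do sketch does not clearly reach it. In the greedy/random version, the failure event at a demand edge $uv$ is ``the candidate set of midpoints is empty''; a union bound over such events is circular, because bounding the probability of each one already requires showing the candidate sets stay nonempty, which is the deterministic claim itself. If instead you cap the number of times a vertex may serve as a midpoint at $m$, then when $uv$ is processed the blocked midpoints number up to $2(q+2m)$ from used edges at $u$ and $v$, plus up to $(nq/2)/m$ vertices that have exhausted their quota; optimizing over $m$ and writing $q=\alpha n$ yields the condition $4\alpha^2-12\alpha+1>0$, i.e.\ $\alpha<\tfrac{3}{2}-\sqrt{2}\approx 0.086$, which falls short of $1/(4+2\sqrt{3})\approx 0.134$. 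So the ``quadratic inequality whose borderline solution is $1/(\sqrt{3}+1)^2$'' is asserted rather than derived, and the naive worst-case version of your estimate lands at a strictly worse constant. The Hall-type reformulation does not rescue this for free either: one must verify Hall's condition for every subset of excess demand edges, and the hard sets are precisely the clustered configurations (e.g.\ $q$-bundles on a near-perfect matching) that you mention without estimating. The underlying idea --- resolving multiplicities by lifting edges to midpoints --- is sound and is in fact the engine of the paper's own improvement, but there it is made to work only by organizing the liftings via Petersen's 2-factor decomposition, balanced lifting colorings, and induction on $n$ (Lemma~\ref{lemma:main}), rather than by a one-shot greedy assignment of length-two detours. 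To complete your proof you would need either the missing extremal analysis that closes the quantitative gap, or a structured scheme of this kind.
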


We improve their result by proving the following theorem:

\begin{thm}\label{delta_n/3}
If $q\leq 2\lfloor\frac{n}{6}\rfloor-4$, then $K_n^q$ is terminal-pairable.
\end{thm}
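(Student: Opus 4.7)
The plan is to resolve an arbitrary demand multigraph $D$ on $V(K_n)$ with $\Delta(D)\leq q:=2\lfloor n/6\rfloor-4$ by choosing, for each demand edge $e=uv$, a length-two path $u-w(e)-v$ in $K_n$ through a carefully selected intermediate $w(e)$. The target value $q\approx n/3$ is explained by a simple budget argument at each vertex $w$: the $n-1$ edges of $K_n$ incident to $w$ must accommodate the $d_D(w)\leq q$ endpoint-uses and two edges for every demand path passing through $w$ as an intermediate, so the average intermediate-load per vertex is $|E(D)|/(n-2)\leq qn/(2(n-2))$, leaving comfortable slack so long as $q\leq n/3-O(1)$.

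As preparation, I would pad $D$ so that $\Delta(D)=q$ exactly (additional demand edges can only make the problem harder), and then decompose $D$ into manageable pieces---either into at most $\lfloor 3q/2\rfloor$ matchings via a Shannon/Vizing-type edge-coloring theorem for multigraphs, or into $2$-factors via a Petersen-type pairing of the demand edges incident at each vertex. A single matching $M$ with $|M|\leq n/2$ is trivially resolvable by length-two paths using pairwise distinct intermediates drawn from $V\setminus V(M)$; the substantive task is to perform all such matching-resolutions simultaneously while preserving global edge-disjointness inside $K_n$.

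I would formulate the intermediate-vertex assignment as a bipartite degree-constrained subgraph (or network flow) problem: each demand edge $e=uv$ requires one intermediate from $V\setminus\{u,v\}$, subject to a capacity $\lfloor(n-1-d_D(w))/2\rfloor$ at each vertex $w$, and to the side constraint that demand edges sharing a common vertex must receive distinct intermediates. The existence of a feasible assignment would follow from a Hall-type deficiency inequality which, after elementary manipulation, reduces to exactly the hypothesis $q\leq 2\lfloor n/6\rfloor-4$. The main obstacle is the side constraint above, which is strictly stronger than a capacity condition and is not implied directly by a flow/matching argument; I would handle it via a local switching argument at the intermediate vertices, and dispose of any residual demand edges resisting a length-two resolution by rerouting them through length-three paths, drawing on the surplus of $K_n$-edges left unused after the main assignment---it is here that the additive $-4$ in the hypothesis provides the necessary slack.
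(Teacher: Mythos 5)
Your proposal correctly identifies the right order of magnitude via the per-vertex budget, and the idea of decomposing $D$ into regular pieces (Petersen $2$-factors) does appear in the actual proof. But as written there is a genuine gap, and it sits exactly where you locate "the main obstacle." Global edge-disjointness of the length-two paths in $K_n$ is \emph{not} captured by your flow formulation: the capacity $\lfloor (n-1-d_D(w))/2\rfloor$ at each intermediate $w$, plus the side constraint that demand edges sharing an endpoint get distinct intermediates, is still insufficient. Two demand edges with no common endpoint can collide (e.g.\ $e_1=uv$ routed through $w$ and $e_2=uw$ routed through $v$ both use the $K_n$-edge $wv$), and a path's leg $uw(e)$ can collide with a demand edge $uw(e)$ of multiplicity one that is kept as a direct edge. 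So the condition you need is a pairwise condition over \emph{all} demand edges and all legs, not a degree-constrained-subgraph condition, and no Hall-type deficiency count "reduces to exactly the hypothesis $q\le 2\lfloor n/6\rfloor-4$" --- that constant has no natural origin in such a computation. The two devices you invoke to close the gap (a local switching argument and rerouting through length-three paths) are precisely the content of the theorem and are left entirely unspecified; also note that a matching in the demand graph needs no resolution at all (it is already a set of edge-disjoint one-edge paths), so the Shannon/Vizing decomposition into matchings does not by itself isolate the difficulty, which lives in the parallel edges and in the interaction between different colour classes on the same vertex pair.

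For contrast, the paper avoids any one-shot global assignment. It argues by induction on $n$: after padding $D$ to be $(2\lfloor n/6\rfloor-4)$-regular, it extracts $2$-factors by Petersen's theorem and lifts the edges of a $2$-factor onto a triple $X=\{x_1,x_2,x_3\}$ of pairwise non-adjacent vertices, using a three-colouring of the $2$-factor (a ``balanced lifting colouring'') to guarantee that each remaining vertex receives at most one lifted edge and that the multiplicities created at the $x_i$ can be resolved locally into large disjoint colour classes. Deleting $X$ twice removes six vertices while the maximum degree drops by two, which is exactly what $2\lfloor n/6\rfloor-4$ tracks --- the constant comes from this inductive bookkeeping, not from a counting inequality. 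If you want to pursue your global approach, you would need to prove an actual assignment lemma with edge-disjointness built in, which is essentially a theorem about path colourings of multigraphs and is unlikely to be easier than the inductive route.
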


Kubicka, Kubicki and Lehel~\cite{grid} investigated terminal-pairability properties of the Cartesian product of complete graphs. In their paper the following ``Clique-Lemma'' was proved and frequently used:
\begin{lemma}[Kubicka, Kubicki, Lehel~\cite{grid}]\label{clique}
Let $G$ be a complete graph on $n$ vertices, where $n\geq 5$. If every vertex of $G$ has at most $n-1$ adjacent terminals and the total number of terminals is $2n$, then for every pairing of terminals, there are edge disjoint paths for all pairs.
\end{lemma}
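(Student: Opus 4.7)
The plan is to realize every demand edge by a path of length $1$ or $2$ in $K_n$, with the resulting family of paths pairwise edge-disjoint. Let $D_s$ denote the underlying simple graph of $D$ and set $r:=n-|E(D_s)|$, the number of ``excess'' parallel copies. For exactly one copy of each distinct pair $uv$ appearing in $D$ I use the $K_n$-edge $uv$ directly as the resolving path; for each excess copy of some $uv$ I must select a middle vertex $w\in V\setminus\{u,v\}$ and route the path $u\,w\,v$. The two edges $uw$ and $vw$ must then avoid both $E(D_s)$ (which is spoken for by the direct paths) and every previously placed detour edge.

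A preliminary counting check confirms that the scheme is not obstructed by sheer lack of space: it uses at most $|E(D_s)|+2r=2n-|E(D_s)|$ of the $\binom n2$ edges of $K_n$, and the inequality $\binom n2\ge 2n-|E(D_s)|$ reduces to $n^2-5n+2|E(D_s)|\ge 0$, which holds for $n\ge 5$. To turn this into an actual assignment I would process the excess copies one by one in order of non-increasing multiplicity of the pair they duplicate, and at every step argue that a legal middle vertex exists. When the current copy duplicates $uv$, the forbidden middles are $u$, $v$, and those $w$ with $uw$ or $vw$ already used; the bound $d_D(u),d_D(v)\le n-1$ together with a bookkeeping bound on how many detour edges have been placed at $u$ or $v$ so far shows that the forbidden set has size strictly less than $n$, so an eligible $w$ exists.

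The main obstacle I foresee is the extremal configuration in which a single pair $uv$ carries multiplicity close to $n-1$: almost every vertex of $V\setminus\{u,v\}$ must then serve as middle for a copy of $uv$, which essentially saturates the $K_n$-edges at $u$ and $v$. I would handle this by processing such heavy pairs first and then observing that, thanks to the degree bound $n-1$, the residual demand concentrates almost entirely on $V\setminus\{u,v\}$, so the still-intact $K_{n-2}$ on that set easily absorbs it, together with at most one further edge incident to $u$ or $v$ that the degree bound permits. A cleaner alternative, should the greedy bookkeeping become awkward, is to cast the detour assignment as a bipartite matching problem---excess copies on one side, middle vertices on the other with capacity $\lfloor(n-1-d_{D_s}(w))/2\rfloor$---and verify Hall's condition directly from $\Delta(D)\le n-1$, $|E(D)|=n$, and $n\ge 5$.
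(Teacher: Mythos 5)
Your plan---realize each demand edge by a path of length at most two, reserving the $K_n$-edge $uv$ for one copy of each distinct demand pair and assigning middle vertices to the excess copies---fails on a concrete admissible instance, and the failure is exactly the configuration you flag but do not actually repair. Let $D$ consist of $n-2$ parallel copies of $uv$, one copy of $uw$, and one copy of $vz$, with $u,v,w,z$ distinct. Then $|E(D)|=n$ and $d(u)=d(v)=n-1$, so the hypotheses of Lemma~\ref{clique} hold. Under your scheme the $K_n$-edges $uv$, $uw$, $vz$ are committed to direct paths, so a middle vertex $m$ for an excess copy of $uv$ must lie in $V\setminus\{u,v,w,z\}$ and can serve only one such copy (otherwise $um$ is reused); that is $n-4$ candidates for $n-3$ excess copies. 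Your claimed invariant that the forbidden set always has size strictly less than $n$ fails at the last step, where it has size $4+(n-4)=n$; and in the matching reformulation Hall's condition fails for the set of excess copies of $uv$ once you impose the real constraint that a middle can serve a given pair at most once (your capacity $\lfloor(n-1-d_{D_s}(w))/2\rfloor$ is too coarse, since a middle needs the two \emph{specific} edges $uw$, $vw$ free, not just two free edges). No processing order helps, because the obstruction comes from the initial commitment to resolve $uw$ and $vz$ directly. Your sketched repair also rests on a miscount: with a bundle of multiplicity $n-2$, the degree bound permits one further edge at $u$ \emph{and} one further edge at $v$, i.e.\ two residual demand edges meeting $\{u,v\}$, not ``at most one.''

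The instance is of course still resolvable---e.g.\ route $uw$ and $vz$ through a common middle vertex left out of the bundle's detours, or give one copy of $uv$ a path of length three---so the lemma is not in danger, but rescuing your argument requires abandoning the fixed ``one direct copy per distinct pair'' rule near a heavy bundle and carrying out the case analysis your sketch postpones. Note also that the paper does not prove Lemma~\ref{clique} at all (it is quoted from the literature); what it proves is the stronger Theorem~\ref{alpha_2n-5}, by induction on $n$: choose a vertex $x$, lift edges to resolve every multiplicity at $x$ while decreasing the degree of each vertex of degree at least $n-2$, delete $x$, and recurse. That scheme lets paths grow beyond length two across induction steps and thereby avoids the saturation problem your greedy runs into.
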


In the same paper the following related problem was raised about the possible strengthening of Lemma~\ref{clique}:
\begin{problem}[\cite{grid}]
Find the largest value of $\alpha$ such that $K_n$ with $\alpha\cdot n$ terminals (at most $n-1$ at each vertex) has the above property for all $n$ larger than some constant $n_0$.
\end{problem}
Obviously, $2\leq\alpha$ due to Lemma~\ref{clique}. It is also easy to see that $\alpha < 4$. Let $D$ be a demand graph on $n\ge 4$ vertices, in which two pairs of vertices, $U,V$ and $X,Y$ are both joined by $(n-2)$ parallel edges ($2n-4$ edges or equivalently $4n-8$ terminals in total; $d_D(W)=0$ for $W\not\in\{X,Y,U,V\}$). Observe that to resolve the demand graph any disjoint path system must contain a path from $X$ to $Y$ passing through $U$ or $V$.
However, there are also $n-2$ disjoint paths connecting $U$ and $V$, meaning that $U$ or $V$ is incident to at least $2+(n-2)=n$ disjoint edges, which is clearly a contradiction. This implies that the number of terminals in $G$ cannot exceed $4n-10$. We show that this bound is sharp by proving the following theorem:

\begin{thm}\label{alpha_2n-5}
Let $D$ be a demand graph
%on $n\geq 5$ vertices
with at most $2n-5$ edges such that no vertex is incident to more than $n-1$ edges. Then $D$ can be resolved.
\end{thm}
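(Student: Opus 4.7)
My plan is to proceed by strong induction on $|E(D)|$. The base case is $|E(D)| \le n$: after padding $D$ with dummy demand edges (respecting $\Delta(D) \le n-1$) until it has exactly $n$ edges, the statement follows directly from Lemma~\ref{clique}. For the inductive step, I would assume $n < |E(D)| \le 2n-5$ and that the theorem holds for all demand graphs on $n$ vertices with strictly fewer edges and maximum degree at most $n-1$.

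For the inductive step, I would seek a demand edge $e = uv$ together with a short $uv$-path $P$ in $K_n$ (of length one or two) such that $D - e$ can still be resolved inside $K_n \setminus E(P)$. Two candidates for $e$ present themselves. Because $|E(D)| \le 2n-5$ forces the average $D$-degree to be less than four, some vertex $v$ must satisfy $d_D(v) \le 3$; any demand edge incident to $v$ is a promising candidate, since the remaining demand is concentrated far from $v$ and leaves abundant routing flexibility in $K_n$ around $v$. Alternatively, if $D$ contains a bundle $uv$ of large multiplicity, I would peel off a single copy via a length-two path from $u$ to $v$ through a router $w$ of small $D$-degree, thereby reducing the multiplicity of the offending bundle.

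The chief technical hurdle is that the induction is applied inside $K_n \setminus E(P)$ rather than $K_n$, whereas the inductive hypothesis is stated for $K_n$ itself. To bridge this gap I would strengthen the statement to allow a bounded number of forbidden edges concentrated at specified vertices, with the bound on $|E(D)|$ reduced accordingly, and prove the stronger statement by the same induction. Showing that the weakening of the host graph caused by one peeling step can be absorbed into such a strengthened hypothesis is a routine but necessary bookkeeping step.

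The subtlest part, however, will be handling the extremal family highlighted just before the theorem statement: two disjoint vertex pairs joined by bundles of multiplicity close to $n-2$. Because the $2n-5$ bound is sharp against this family, the inductive step must treat it as a separate case; this likely requires a hand-crafted resolution along the lines sketched preceding the theorem, routing each bundle through the endpoints of the other and carefully checking that no $K_n$-edge is reused. Verifying that every such near-extremal configuration is resolvable — while the generic case falls to the inductive step — is where I expect the bulk of the proof's effort to lie.
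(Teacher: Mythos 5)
There is a genuine gap in your inductive framework. You induct on $|E(D)|$ with $n$ fixed, peeling off one demand edge at a time and routing it along a path of length one or two in $K_n$; but each such step consumes up to two host edges while removing only one demand edge, so by the time you reach the base case $|E(D)|\le n$ you have forbidden up to $2(n-5)$ edges of $K_n$. Your base case invokes Lemma~\ref{clique}, which is only available for the intact complete graph, and the ``strengthened statement allowing a bounded number of forbidden edges'' that you propose to bridge this is never formulated. It is not a routine bookkeeping step: the forbidden set grows to linear size, it need not stay concentrated at a few vertices (the routers $w$ you use change from step to step), and the trade-off you suggest --- reducing the bound on $|E(D)|$ to compensate --- does not obviously balance, since each peeling step costs two host edges but buys only one demand edge. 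In effect you would need to prove a version of the theorem (and of the Clique Lemma) for $K_n$ minus an arbitrary collection of $\Theta(n)$ edges, which is a substantially harder statement and may simply be false without strong structural hypotheses on the forbidden set.

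The paper avoids this difficulty entirely by inducting on $n$ rather than on $|E(D)|$: it picks a vertex $x$, resolves all multiplicities at $x$ by lifting parallel edges to non-neighbors of $x$ (so that the demand edges at $x$ become pairwise distinct and can be routed directly through the host edges at $x$), and then deletes $x$, leaving a demand graph on $K_{n-1}$ --- again a complete graph --- with at most $2(n-1)-5$ edges and maximum degree at most $n-2$. The casework there is governed by the set $B$ of vertices of degree at least $n-2$ (which has size at most $3$ by an edge count), and your ``extremal family'' of two large bundles is indeed handled as the hardest subcase ($|B|=2$ with $d(B,V(D)-B)=0$), so your instinct about where the difficulty concentrates is sound. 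But as written, your induction does not close, and repairing it would require either reformulating it as a vertex-deletion induction or proving a genuinely stronger statement about incomplete host graphs.
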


Before the proofs we fix further notation and terminology.
For convenience, we call a pair of edges joining the same two vertices a $C_2$. For $k>2$, $C_k$ denotes the cycle on $k$ vertices.
For a subset $S\subset V(G)$ of vertices let $d(S,V(G)-S)$ denote the number of edges with exactly one endpoint in $S$. Let $[S]$ denote the subgraph induced by the subset of vertices $S$. We call a pair of vertices joined by $k$ parallel edges a {\itshape $k$-bundle}.
%; unless stated otherwise, vertices of a $k$-bundle have no additional neighbors. %% pont fordítva van...
For a vertex $v$ we denote the set of neighbors by $\Gamma(v)$ and use $\gamma(v)=|\Gamma(v)|$. We define  the {\itshape multiplicity\/} $m(v)$ of a vertex $v$ as follows: $m(v)=d(v)-\gamma(v)$. Observe that $m(v)$ is the minimal number of direct edges that need to be replaced by longer paths in the graph to guarantee an edge-disjoint path-system for the terminals of $v$.
%We call this the {\itshape resolution\/} of the multiplicities of $v$ (see Figure~\ref{fig:resolve}). %% későbbre kerül
%We also use the term for a set of $k$ parallel edges, meaning that we replace at least $k-1$ of them by longer paths joining the same end-vertices. %% az egyetlen hely, ahol ez használva van, szintén kifejti

\begin{figure}
	\centering
	\begin{subfigure}{.5\textwidth}
		\centering

			\begin{tikzpicture}

			\node[draw, circle] (u) at (-1,0) {$u$};
      \node[draw, circle] (v) at (1,0) {$v$};
      \node[draw, circle] (w) at (2,2) {$w$};
      \node[draw, circle] (z) at (-2,2) {$z$};

      \foreach \x/\y/\b/\c in {u/v/30/dotted,u/v/0/dashed,u/v/-30/thin,v/w/-30/thin}
			{
        \draw[\c] (\x) edge[bend left=\b] (\y);
			}
			\end{tikzpicture}

		\caption{Before}
	\end{subfigure}%
	\begin{subfigure}{.5\textwidth}
		\centering

    \begin{tikzpicture}

    \node[draw, circle] (u) at (-1,0) {$u$};
    \node[draw, circle] (v) at (1,0) {$v$};
    \node[draw, circle] (w) at (2,2) {$w$};
    \node[draw, circle] (z) at (-2,2) {$z$};

    \foreach \x/\y/\b/\c in {u/z/0/dotted,z/v/30/dotted,u/w/30/dashed,w/v/0/dashed,u/v/-30/thin,v/w/-30/thin}
    {
      \draw[\c] (\x) edge[bend left=\b] (\y);
    }
    \end{tikzpicture}

		\caption{After}
	\end{subfigure}
	\caption{Lifting 2 edges of $uv$ to $z$ and $w$}\label{fig:lifting}
\end{figure}
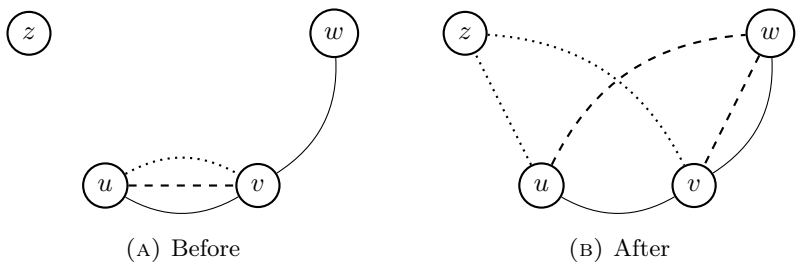

We define an operation that we will subsequently use in our proofs: given an edge ${uv}\in E$ we say that we {\itshape lift\/} ${uv}$ to a vertex $w$ when substituting the edge ${uv}$ by a path of consecutive edges ${uw}$ and ${wv}$. Note that this operation increases the degree of $w$ by 2, but does not affect the degree of any other vertex (including $u$ and $v$). Also, as a by-product of the operation, if $w$ is already joined by an edge to $u$ or $v$, the multiplicity of the appropriate pair increases by one (see Figure~\ref{fig:lifting}).

Finally, note that if a graph $G$ has $n$ vertices and $d(v)\leq n-1$, all multiplicities of $v$ can be easily resolved by subsequent liftings.
Indeed, $v$ has $n-1-\gamma(v)$ non-neighbors and $m(v)=d(v)-\gamma(v)\leq n-1-\gamma(v)$ multiplicities, thus we can assign every edge of $v$ causing a multiplicity to a non-neighbor to which that particular edge can be lifted.  We call this {\itshape resolution of the multiplicities of $v$\/} (see Figure~\ref{fig:resolve}).

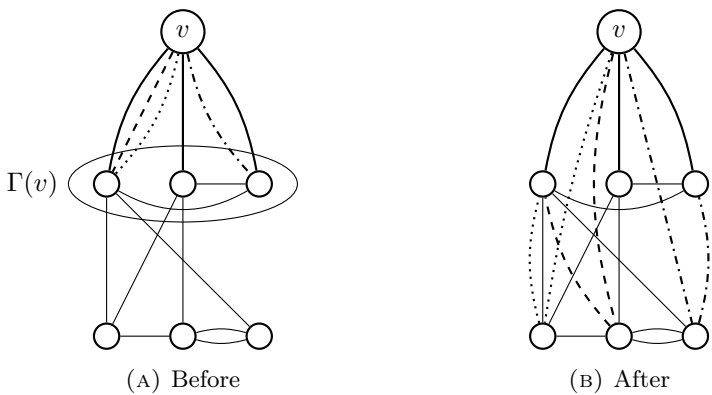
\begin{figure}
	\centering
	\begin{subfigure}{.5\textwidth}
		\centering

			\begin{tikzpicture}

			\node[draw, circle] (v) at (0,3) {$v$};

      \draw[thin] (0,1) ellipse (1.5 and 0.5);

      \foreach \x in {0,1,2}
        \node[draw, circle] (u\x) at (\x-1,1) {};

      \foreach \x in {0,1,2}
        \node[draw, circle] (w\x) at (\x-1,-1) {};

      \foreach \x/\b/\c in {0/15/dotted,0/0/dashed,0/-15/thick,1/0/thick,2/-15/dashdotted,2/15/thick}
      {
        \draw[\c] (v) edge[bend left=\b] (u\x);
      }

      \foreach \x/\y/\b in {u0/u2/-30,u1/u2/0,w1/w2/15,w1/w2/-15,u0/w0/0,w0/w1/0,u0/w2/0,w0/u1/0,u1/w1/0}
        \draw[thin] (\x) edge[bend left=\b] (\y);

      \node[anchor=east] (label_u) at (-1.5,1) {$\Gamma(v)$};
      \node[anchor=west,white] (label_u_invisible) at (1.5,1) {$\Gamma(v)$};

      \end{tikzpicture}

		\caption{Before}
	\end{subfigure}%
	\begin{subfigure}{.5\textwidth}
		\centering

    \begin{tikzpicture}

    \node[draw, circle] (v) at (0,3) {$v$};

    \foreach \x in {0,1,2}
      \node[draw, circle] (u\x) at (\x-1,1) {};

    \foreach \x in {0,1,2}
      \node[draw, circle] (w\x) at (\x-1,-1) {};

    \foreach \x/\b/\c in {0/-15/thick,1/0/thick,2/15/thick}
    {
      \draw[\c] (v) edge[bend left=\b] (u\x);
    }

    \foreach \x/\y/\b in {u0/u2/-30,u1/u2/0,w1/w2/15,w1/w2/-15,u0/w0/0,w0/w1/0,u0/w2/0,w0/u1/0,u1/w1/0}
      \draw[thin] (\x) edge[bend left=\b] (\y);

    \foreach \x/\y/\b/\c in {v/w0/-5/dotted,w0/u0/15/dotted,v/w1/-15/dashed,w1/u0/15/dashed,v/w2/0/dashdotted,w2/u2/-15/dashdotted}
    {
      \draw[\c] (\x) edge[bend left=\b] (\y);
    }

    \end{tikzpicture}
		\caption{After}
	\end{subfigure}
	\caption{Resolving the multiplicities at $v$}\label{fig:resolve}
\end{figure}

%%%%%%%%%%%%%%%%%%%%%%%%%%%%%%%%%%%%%%%%%%%%%%%%%%%%%%%%%%%%%%%
%					Proof of Theorem delta < n/3
%%%%%%%%%%%%%%%%%%%%%%%%%%%%%%%%%%%%%%%%%%%%%%%%%%%%%%%%%%%%%%%
\section{Proof of Theorem~\ref{delta_n/3}}

We show that if $D=(V,E)$ is a demand multigraph on $n$ vertices and $\Delta(G)\leq 2\lfloor\frac{n}{6}\rfloor-4$, then $D$ can be transformed into a simple graph by replacing parallel edges by paths of $D$. We prove the statement by induction on $n$. Observe first that the statement is obvious for $n < 18$. For $18\leq n < 24$, note that the demand graph $D$ is the disjoint union of 2-bundles, circles, paths, and isolated vertices. It is easy to see that multiplicities in these demand graphs can be resolved; we leave the verification of the statement to the reader.

From now on assume $n\geq 24$. We may assume without loss of generality that $D$ is an $\big(2\lfloor\frac{n}{6}\rfloor-4\big)$-regular multigraph; if necessary, additional parallel edges may be added to $D$. Should a single vertex $v$ fail to meet the degree requirement, we bump up its degree by further lifting operations as follows: as the deficit $\big(2\lfloor\frac{n}{6}\rfloor-4\big)-d(v)$ must be even, we can lift an arbitrary edge $e\in E([V(D)-v])$ to $v$. We remind the reader that lifting $e$ to $v$ increases $d(v)$ by two while it does not affect the degree of the rest of the vertices.

We will use the well known 2-Factor-Theorem of Petersen~\cite{Petersen}. Be aware that a 2-factor of a multigraph may contain several $C_2$'s (however, this is the only way parallel edges may appear in it).
\begin{thm}[\cite{Petersen}]\label{Petersen}
Let $G$ be a $2k$-regular multigraph. Then $E(G)$ can be decomposed into the union of $k$ edge-disjoint $2$-factors.
\end{thm}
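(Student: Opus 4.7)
The plan is to prove Petersen's 2-factor theorem via the classical route of Eulerian orientation plus bipartite edge-coloring, so that Theorem~\ref{Petersen} is obtained as a combinatorial corollary of K\H{o}nig's theorem.

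First I would reduce to the case where $G$ is connected, since a decomposition of each connected component into $k$ edge-disjoint 2-factors yields such a decomposition for $G$. Because $G$ is $2k$-regular, every vertex has even degree, so each component admits a closed Eulerian trail. Traversing this trail and orienting each edge in the direction it is used produces an orientation of $G$ in which every vertex has in-degree $k$ and out-degree $k$.

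Next I would build an auxiliary bipartite multigraph $H$ on vertex set $V^{+}\cup V^{-}$, where $V^{\pm}=\{v^{\pm}:v\in V(G)\}$. For each oriented edge $(u,v)$ of $G$, add an edge $\{u^{+},v^{-}\}$ to $H$. By the previous step, each $u^{+}$ has degree $k$ (the out-degree of $u$) and each $v^{-}$ has degree $k$ (the in-degree of $v$), so $H$ is $k$-regular bipartite. K\H{o}nig's edge-coloring theorem, which applies to bipartite multigraphs, then partitions $E(H)$ into $k$ perfect matchings $M_{1},\dots,M_{k}$; equivalently, one can peel off perfect matchings one at a time using Hall's marriage theorem, since every $k$-regular bipartite multigraph satisfies Hall's condition.

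Finally I would pull each matching $M_{i}$ back to $G$ by identifying $v^{+}$ with $v^{-}$ for every $v$, producing an edge set $F_{i}\subseteq E(G)$. At each vertex $v$, the matching $M_{i}$ contains exactly one edge at $v^{+}$ (a unique outgoing edge in the orientation) and exactly one edge at $v^{-}$ (a unique incoming edge), so $v$ has degree exactly $2$ in $F_{i}$; hence $F_{i}$ is a 2-factor. Since the $M_{i}$ partition $E(H)$, the $F_{i}$ partition $E(G)$, giving the desired decomposition.

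The main delicate point is the multigraph bookkeeping: $G$ may carry parallel edges, so the Eulerian circuit and the edge coloring must be applied in their multigraph versions, and one must be careful to treat each parallel copy as a distinct edge throughout the construction (this is also what makes $C_{2}$'s admissible as components of a 2-factor). Loops, if one wished to allow them, could be handled separately by placing each loop into its own 2-factor as a degenerate cycle contributing $2$ to the degree of its vertex. With these bookkeeping issues settled, the argument goes through cleanly.
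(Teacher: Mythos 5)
The paper does not prove this statement at all: it is quoted verbatim as Petersen's classical 2-Factor-Theorem with a citation to \cite{Petersen}, and is then used as a black box in the proof of Theorem~\ref{delta_n/3}. So there is no in-paper argument to compare against; what can be assessed is only the correctness of your proof, and it is correct. It is the standard modern route: reduce to connected components, use that a $2k$-regular multigraph has all degrees even to get an Eulerian orientation with in-degree and out-degree $k$ everywhere, pass to the $k$-regular bipartite multigraph on $V^{+}\cup V^{-}$, decompose it into $k$ perfect matchings by K\H{o}nig's theorem (or by repeatedly applying Hall's condition, which every regular bipartite multigraph satisfies), and pull each matching back to a spanning subgraph in which every vertex has exactly one in-edge and one out-edge, i.e.\ a $2$-factor. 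Your multigraph bookkeeping is also consistent with the paper's usage: a pair of parallel edges oriented oppositely in the Eulerian orientation can land in the same matching and pulls back to a $C_2$ component, which is precisely the phenomenon the paper flags just before stating the theorem (``a 2-factor of a multigraph may contain several $C_2$'s''). One minor quibble: your aside on loops is sloppy as stated --- a single loop cannot constitute ``its own 2-factor,'' since a $2$-factor must be spanning; the clean statement is that a loop at $v$ can serve as a degenerate cycle \emph{within} one of the $2$-factors, with the remaining $2k-2$ half-edges at $v$ distributed among the others. Since the paper's demand multigraphs are loopless, this side remark is moot and does not affect the proof.
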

Some operations, which are performed later in the proof, are featured in the following definition, claim, and lemma.
\begin{definition}[Lifting coloring]
Let $F$ be a multigraph, and $c:E(F)\cup V(F)\to \{1,2,3\}$ be a coloring of the edges and vertices of $F$. We call $c$ a {\itshape lifting coloring\/} of $F$ if and only if
\begin{enumerate}
\item for any edge $e=uv\in E(F)$, $c(u)\neq c(e)$ and $c(v)\neq c(e)$, and
\item for any two edges $e_1,e_2\in E(F)$ incident to a common vertex we have $c(e_1)\neq c(e_2)$.
\end{enumerate}
%(Lifting coloring is a total 3-coloring, but adjacent vertices can get the same color.)
Moreover, if the number of vertices in different color classes differ by either 0, 1, or 2, then we call $c$ a {\itshape balanced lifting coloring\/} of $F$.
\end{definition}

\begin{claim}\label{claim:liftcolor}
  Let $F$ be a multigraph such that $\forall v\in V(F)$ we have $d_F(v)\le 2$. If $w_1,w_2,w_3\in V(F)$ are three pairwise non-adjacent different vertices, then $F$ has a balanced lifting coloring where $w_i$ gets color $i$.
\end{claim}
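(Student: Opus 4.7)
Since $\Delta(F)\le 2$, each connected component of $F$ is an isolated vertex, a path, or a cycle (the $C_2$ case of two parallel edges between two vertices being allowed). My strategy is to first produce a lifting coloring respecting the constraints $c(w_i)=i$, and then rearrange it to be balanced.

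For the first step, I would fix a proper $3$-edge-coloring $\chi$ of $F$ (which exists since a multigraph of maximum degree $\le 2$ has edge-chromatic number at most $3$) satisfying the local condition that the at most two edges at each $w_i$ use colors from $\{1,2,3\}\setminus\{i\}$; this is possible because we have two available colors for at most two edges. Since $w_1,w_2,w_3$ are pairwise non-adjacent, no edge is incident to two of them, and the three local requirements live on disjoint edge sets. At each vertex $v$ let $A(v)=\{1,2,3\}\setminus\chi(\text{edges at }v)$; then $|A(v)|\ge 1$, and $i\in A(w_i)$. Setting $c(w_i)=i$ and choosing any $c(v)\in A(v)$ for the remaining vertices (forced when $\deg v=2$, with $2$ or $3$ free choices at degree-$1$ or isolated vertices) yields a coloring that already satisfies conditions $(1)$ and $(2)$ of the definition of a lifting coloring.

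For the second step, to obtain balance, I would use the remaining freedom in (a) the choice of $\chi$ on components disjoint from $\{w_1,w_2,w_3\}$ and (b) the choice of $c(v)\in A(v)$ at degree-$\le 1$ vertices. The key tool would be a local re-coloring move: on a sufficiently long path or long cycle, a short swap of edge-colors inside the component transports one vertex between color classes, so any imbalance exceeding $2$ can be gradually reduced by iterating such moves. For the rigid small components---isolated vertices, $C_2$ (both of whose vertices are forced to share one color), $C_3$ (forced to the distribution $(1,1,1)$), and $C_4$ (admitting only distributions with all-even entries)---I would catalog the finitely many achievable distributions on each and check, by a bounded case analysis, that combined with the flexibility of the remaining components one can always reach a final distribution whose three class sizes differ by at most $2$.

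The main obstacle is the balancing step. The distributions offered by the rigid components are quite restricted, so one must combine their contributions carefully with the flexibility of longer paths, longer cycles, and isolated vertices to guarantee a globally balanced outcome; this bookkeeping is where the bulk of the argument is expected to lie.
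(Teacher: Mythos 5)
The paper does not actually prove this claim: the authors state that ``the proof is easy but its complete presentation requires a rather lengthy (but straightforward) casework'' and leave it to the reader, so there is no written argument to compare yours against in detail. Your outline is the natural route and almost certainly the one the authors have in mind: since $\Delta(F)\le 2$ the components are isolated vertices, paths, and cycles (including $C_2$'s); a lifting coloring is exactly a proper $3$-edge-coloring together with a choice, at each vertex, of a color absent from its incident edges (forced at degree-$2$ vertices, with $2$ or $3$ choices at degree-$\le 1$ vertices); and your catalog of the rigid components is accurate ($C_2$ forces both endpoints into one common class, $C_3$ forces the distribution $(1,1,1)$, $C_4$ forces an all-even distribution). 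You also correctly observe that balance is not automatic --- e.g.\ an even cycle edge-colored with only two colors sends \emph{all} of its vertices to the third class --- so the recoloring/transport step is genuinely needed.

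That said, as written your proposal is a plan rather than a proof, and it stops exactly where the work is: (a) you assert, but do not verify, that the local requirement ``edges at $w_i$ avoid color $i$'' always extends to a proper $3$-edge-coloring of a cycle component containing two or three of the $w_i$ (this is true --- the $w_i$ are pairwise non-adjacent, so one can check the short cycles $C_4,C_5,C_6$ by hand and propagate greedily on longer ones --- but it is a claim that interacts nontrivially with the cyclic structure and must be checked); and (b) the final bookkeeping, showing that the contribution vectors of the rigid components (each an even vector placeable in a choice of classes, subject to the constraints imposed by any $w_i$ they contain) can always be combined with the flexible components to land within discrepancy $2$, is only promised. Since condition (b) is precisely the ``lengthy but straightforward casework'' the paper itself declines to write out, your attempt has the same status as the paper's: a correct strategy with the decisive verification deferred. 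To make it a complete proof you would need to carry out that finite case analysis explicitly.
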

\begin{proof}
The proof is easy but its complete presentation requires a rather lengthy (but straightforward) casework. We leave the verification of the statement to the reader. Figure~\ref{fig:liftcoloring} shows an example output of this lemma.
\begin{figure}
		\centering

			\begin{tikzpicture}
      \def \uno {thick}
      \def \due {dotted}
      \def \tre {dashed}
      \def \dotsize {5pt}

      \foreach \x/\c in {1/\uno,2/\due,3/\tre}
        \node[draw, circle, \c] (x\x) at (\x-2,3) {$x_\x$};

      %\draw[thick, dotted] (0,3) ellipse (2 and 0.75);

      \def \2fshift {1};

      \foreach \i/\c in {1/\uno,2/\due,3/\due}
        \node[shift={(-3,1)}, circle, draw, inner sep=\dotsize, \c] (b\i) at ({+90+90* \i}:1) {};

      \foreach \i/\c in {1/\due,2/\due,3/\uno,4/\tre}
        \node[shift={(0,0.5)}, circle, draw, inner sep=\dotsize, \c] (c\i) at ({+90+72* \i}:1) {};

      \foreach \i/\c in {1/\tre,2/\uno,3/\uno}
        \node[shift={(2,0.5)},draw, circle, inner sep=\dotsize, \c] (d\i) at (1,2.5-\i) {};

      \foreach \i/\j/\c in {b1/x1/\due,x1/b3/\tre,b3/b2/\uno,b2/b1/\tre}
        \draw (\i) edge[very thick,\c] (\j);

      \foreach \i/\j/\c in {c4/x2/\uno,x2/c1/\tre,c1/c2/\uno,c2/c3/\tre,c3/c4/\due}
        \draw (\i) edge[very thick,\c] (\j);

      \foreach \i/\j/\c in {d1/x3/\due,x3/d1/\uno,d2/d3/\due,d3/d2/\tre}
        \draw (\i) edge[very thick, bend left=15,\c] (\j);

      \end{tikzpicture}

		\caption{A balanced lifting coloring, where $x_1,x_2,x_3$ get pairwise different colors.}\label{fig:liftcoloring}
\end{figure}
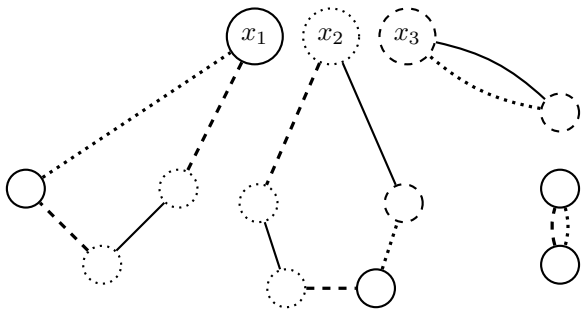
\end{proof}

\begin{lemma}\label{lemma:main}
  Let $D$ be a demand graph on $n$ vertices, such that $\Delta(D)\le \lfloor\frac{n}{3}\rfloor-4$. Furthermore, let $X=\{x_1,x_2,x_3\}$ be a subset of $V(D)$ of cardinality 3, such that $|E(D[X])|=0$. Let $B$ be an at most 3 element subset of $V(D)\setminus X$. Let $F$ be an $\le 2$-factor of $D$. Then there exists a demand graph $H$ which satisfies
    \begin{itemize}
      \item $V(H)=V(D)\setminus X$,
      \item $E(H)\supset E(D[V(H)])\setminus F$,
      \item $\{e\in E(H):\ e\text{ is incident to at least one of }B\}\subset E(D)$, and
      \item for any $v\in V(H)$ we have $d_H(v)\le d_D(v)-d_F(v)(+1\text{ if } v\notin B)$.
    \end{itemize}
    Moreover, if $H$ has a resolution, then so does $D$.
\end{lemma}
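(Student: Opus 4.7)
The strategy is to absorb every demand incident to $X$ and every $F$-edge of $D[V(H)]$ into length-two $K_n$-paths through $X$, collecting the residual demand on $V(H)$ as $H$. First apply Claim~\ref{claim:liftcolor} to $F$ with the pairwise $F$-non-adjacent triple $(x_1,x_2,x_3)$ to obtain a balanced lifting coloring $c$ with $c(x_i)=i$. For every $F$-edge $uv$ with $u,v\in V(H)$ and $c(uv)=i$, reserve the $K_n$-path $u\,x_i\,v$ as its resolution. For every $x_i$, pair up its $D$-incident demands into pairs $\{x_iu,x_iv\}$ (leaving one unpaired if $d_D(x_i)$ is odd); each pair introduces a new demand $uv$ in $H$ resolved by the reserved path $u\,x_i\,v$, and each unpaired $x_iu$ is resolved directly by the $K_n$-edge $x_iu$. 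Set $H:=(D[V(H)]\setminus F)\cup\{\text{pairing edges}\}$; the first two conditions of the lemma hold by construction.

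To verify the degree bound at $v\in V(H)$, put $\beta_v=|N_D(v)\cap X|$, $\alpha_v=|N_F(v)\cap X|$, and $f_v=d_F(v)-\alpha_v$. From $E(D[V(H)])\setminus F$ we keep $d_D(v)-\beta_v-f_v$ edges at $v$; the pairings contribute exactly $\beta_v$ new edges at $v$ (one per $D$-edge from $v$ to $X$), yielding $d_H(v)=d_D(v)-f_v=d_D(v)-d_F(v)+\alpha_v$. The required bound $d_H(v)\le d_D(v)-d_F(v)+[v\notin B]$ thus reduces to $\alpha_v\le[v\notin B]$. Violations occur only when $\alpha_v=2$ (possible for at most three vertices, since $\sum_i d_F(x_i)\le 6$) or when $v\in B$ has $\alpha_v\ge 1$ (at most three vertices). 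At each bad $v$, repair by cancelling the offending pairing: if $\{x_iv,x_iw\}$ was paired, replace it by direct resolutions of $x_iv$ and $x_iw$; this drops $vw$ from $H$, strictly decreases $d_H(v)$, and preserves the slot budget at $x_i$. The constraint $E(H)|_B\subseteq E(D)$ is enforced by selecting each pairing $(x_iv,x_iw)$ with $v\in B$ so that $vw\in E(D)$, falling back on cancellation otherwise.

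Edge-disjointness of the reservations through $X$ rests on the lifting coloring: the two $F$-edges at any $v$ carry distinct colors, so they route through different $x_i$'s; the color-$i$ $F$-edges form a matching, so their $x_i$-slots are pairwise distinct; and the total slot demand at $x_i$ is $d_D(x_i)+2r_i\le(\lfloor n/3\rfloor-4)+2\lfloor(n+2)/3\rfloor\le n-3$, the available $x_i$-slot count, where $r_i$ denotes the number of color-$i$ $F$-edges inside $V(H)$. Any resolution of $H$ lives in $K_{V(H)}$ and is automatically edge-disjoint from the $X$-reservations, so concatenating them yields a resolution of $D$. The main expected obstacle is the fine-grained scheduling at each $x_i$: a $D$-edge $x_iu$ of multiplicity $k\ge 2$ cannot let all its copies share slot $x_iu$, and a coincidence of a $D$-demand $x_iu$ with a color-$i$ $F$-routing incident to $u$ also fights for that same slot. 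Both issues are resolved by lifting excess copies to non-$x_i$-neighbors (as in the paper's \emph{resolution of multiplicities}) or by keeping the occasional $F$-edge inside $H$ rather than routing it through $X$; a slot-count check shows the budget, although tight when $n\equiv 1\pmod 3$, still accommodates this bookkeeping.
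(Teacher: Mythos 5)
Your overall architecture matches the paper's: route the $F$-edges through $X$ via the balanced lifting coloring of Claim~\ref{claim:liftcolor}, push the demands incident to $X$ down into $V(H)$, and observe that a resolution of the residual graph $H$ concatenates with the reserved length-two paths. But the central mechanism you use to eliminate the demands at $x_i$ --- pairing $\{x_iu,x_iv\}$ into a ``new demand $uv$ in $H$ resolved by the reserved path $u\,x_i\,v$'' --- is not a valid resolution step. The two demands $x_iu$ and $x_iv$ each require their own path starting at $x_i$; the path $u$--$x_i$--$v$ already supplies both (as the edges $x_iu$ and $x_iv$), so the extra demand $uv$ placed in $H$ is a phantom that corresponds to nothing, inflates $d_H(u)$ and $d_H(v)$, and forces your subsequent ``repair by cancellation'' bookkeeping. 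Read the other way (that $uv\in E(H)$ is a genuine demand to be resolved later inside $K_{V(H)}$), the construction fails outright, since a $u$--$v$ path avoiding $x_i$ resolves neither $x_iu$ nor $x_iv$. The correct move, which the paper uses, is the lifting already defined in the introduction: a demand $x_iu$ either uses the direct edge $x_iu$ (contributing nothing to $H$) or is lifted to some $w$, consuming the slot $x_iw$ and creating the genuine demand $wu$ in $H$.

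This matters because the entire content of the lemma's degree bound --- the ``$+1$ if $v\notin B$'' and the condition that no new edges touch $B$ --- comes precisely from controlling \emph{where} the excess (multiple) demands at $x_i$ get lifted. You defer this to your closing paragraph as ``the main expected obstacle'' and assert that a slot count accommodates it, but the slot count at $x_i$ is not the issue: the issue is that each lift lands on some $w\in V(H)$ and raises $d_H(w)$ by one, so you must guarantee each $w$ receives at most one lifted edge over all three $x_i$'s, that no $w\in B$ receives any, and that the chosen $w$'s do not collide with the $F'$-slots at $x_i$ or create new multiplicities at $x_i$. The paper achieves all three simultaneously by lifting the non-$F'$ edges of $x_i$ only into the color class $Y_i=\{y\in Y\setminus B: c(y)=i\}$ (each vertex used once; color-$i$ vertices are never endpoints of color-$i$ $F$-edges, so no collision with $F'$; $|Y_i|\ge\lfloor n/3\rfloor-5$ suffices; and $D$-neighbors of $x_i$ in $Y_i$ are traded against edges that need no lifting). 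None of this appears in your degree accounting, so as written the proposal does not establish the stated bounds on $d_H$ or the condition on $B$.
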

\begin{proof}
  We will perform a series of liftings in $D$ in two phases, obtaining $D'$ and $D''$. At the end of the second phase, we will achieve that $X$ has no parallel edges in $D''$. Therefore setting $H=D''-X$ will satisfy the second claim of the lemma.

  First, we determine the series of liftings to be executed in the first phase. Notice that Claim~\ref{claim:liftcolor} implies the existence of a balanced lifting coloring $c$ of $F$ such that $c(x_i)\equiv i+1 \pmod 3$. Lift each edge $f\in F$ to $x_{c(f)}$, except if $f$ is incident to $x_{c(f)}$. Let $F'$ be the set of lifted edges, that is
  $$F'=\bigcup_{\substack{f\in F,\\ x_{c(f)}\notin f}}\Big\{\text{two edges joining $x_{c(f)}$ to the two vertices of $f$}\Big\},$$
  where $\dot\cup$ denotes the disjoint union. Let the multigraph $D'$ be defined on the same vertex set as $D$, and let its edge set be $$E(D')=\{e\in E(D):\ e\notin F\text{ or }x_{c(e)}\notin e\}\dot\cup F'.$$ In other words, $D'$ is the demand graph into which $D$ is transformed by lifting the elements of $F$. Let $Y=V(D)\setminus X$. Observe that $d_{D'}(y)=d_{D}(y)$ for $y\in Y$.
  Let $Y_i=\{y\in Y\setminus B\ |\ c(y)=i\}$ be the color $i$ vertices in $Y\setminus B$.  The balancedness of $c$ guarantees that $$|Y_i|=|c^{-1}(i)\setminus\{x_{i-1}\}\setminus B|\ge |c^{-1}(i)|-1-|B|\ge \left\lfloor\frac{n}{3}\right\rfloor -5.$$

  In the second phase, our task is to resolve all multiplicities of $x_i$ in $D'$. Observe that as edges of $F$ of the same color formed a matching, out of every two parallel edges that are incident to $x_i$ in $D'$ at least one of them must be an initial edge in $E(D')\setminus F'$. The vertex $x_i$ is incident to $d_{D'}(x_i)-d_{F'}(x_i)$ edges of $E(D')\setminus F'$; we plan to lift these edges to the elements of $Y_i$ by using every vertex in $Y_i$ for lifting at most once. Since $$d_{D'}(x_i)-d_{F'}(x_i)\le d_{D}(x_i)-1=\Delta(D)-1\le \left\lfloor\frac{n}{3}\right\rfloor-5\le |Y_i|,$$
  and elements of $Y_i$ are not incident to edges of color $i$, the set $Y_i$ offers enough space to carry out the liftings. That being said, note that neighbors of $x_i$ in $Y_i$ cannot be used for lifting as they would create additional multiplicities. On the other hand, if $v\in Y_i$ and  $e={vx_i}\in E(D)$ then $e$ is an initial edge of $x_i$ that either generates no multiplicity at all or it is part of a bundle of parallel edges, one of which we do not lift. In other words, for every vertex of $Y_i$ that is excluded from the lifting we mark an initial edge of $x_i$ that we do not need to lift. As a result of this, resolution of the remaining multiplicities at $x_i$ can be performed in $Y_i-\Gamma(x_i)$. Let $D''$ denote the demand graph obtained after resolving all of the multiplicities of $x_1$, $x_2$, and $x_3$.

  At most 1 element of $E(D')\setminus E(F')$ has been lifted to each $y\in Y$, therefore there are no multiple edges between the sets $X$ and $Y$ in the demand graph $D''$. Moreover, $D''[X]=D'[X]$ is a subgraph of a triangle, which emerges as we lift the at most one edge of color $i+2$ of $x_i$ to $x_{i+1}$ (take the indices cyclically), for $i=1,2,3$.

  Any vertex $y\in Y$ of color $i$ has at most two incident edges in $F'$, joining $y$ to a subset of $\{x_{i+1},x_{i+2}\}$.
  \begin{itemize}
    \item If an edge has been lifted to $y\in Y$ of color $i$, then $y$ is adjacent to $x_i$ and $d_{D''}(y)=d_{D'}(y)+2$. Thus $y$ is joined to at least $d_{F'}(y)+1$ elements of $X$ in $D''$. As no edge of color $i$ can be incident to $y$, we have $d_{F'}(y)=d_F(y)$. Therefore
    $$d_{D''[Y]}(y)\le d_{D''}(y)-d_{F'}(y)-1=d_{D'}(y)-d_{F}(y)+1=d_{D}(y)-d_{F}(y)+1.$$
    \item If no edges have been lifted to $y\in Y$, then $d_{D''}(y)=d_{D'}(y)$ and $y$ is adjacent to at least $d_{F}(y)$ elements of $X$ in $D''$. Therefore
    $$d_{D''[Y]}(y)=d_{D''}(y)-d_F(y)\le d_{D'}(y)-d_F(y)=d_{D}(y)-d_F(y).$$
    %\item Lastly, if $y\in B$, then we made sure that no edges are lifted to it, but $y$ is only joined to one element of $X$. Therefore $$d_{D''[Y]}(y)=d_{D''}(y)-1=d_{D'}(y)-1=d_{D}(y)-1.$$
  \end{itemize}
  As elements of $B$ are excluded from $Y_i$, 0 edges are lifted to them, and so we proved the statement of the lemma.
\end{proof}

Let $X_1=\{x_1,x_2,x_3\}$ be a subset of 3 elements of $V(D)$, such that $D[X]$ has 0 edges. Such a set trivially exists, as any two non-adjacent vertices have $(n-2)-2\Delta(D)\ge \frac{n}{3}+2$ common non-neighbors. Since the degree in $D$ is at least $2\cdot(24/6)-4=4$, Theorem~\ref{Petersen} implies the existence of two disjoint 2-factors, $A_1$ and $A_2$ of $D$.
Notice that $A_2-X$ has 3 path components (as a special case, an isolated vertex is a path on one vertex).
Extend $A_2-X$ to a maximal $\le 2$-factor $F_2$ of $D-A_1$. It is easy to see that there exists a 3-element subset $B_1$ of $V(D)\setminus X$ such that \begin{itemize}
  \item $B_1$ induces 0 edges in $D-A_1$,
  \item $\{v\in V(D)\setminus X:\ d_{F_2}(v)= 0\}\subset B_1$, and
  \item $B_2=\{v\in V(D)\setminus X:\ d_{F_2}(v)=1\}\setminus B_1$ has cardinality at most 3.
\end{itemize}

We are ready to use Lemma~\ref{lemma:main}. First, apply it to $D$, where we lift $F=A_1$ to elements of $X=X_1$, while not creating new edges incident to $B=B_1$. Let the obtained graph be $H_1$. We have $\Delta(H_1)\le \Delta(D)-\delta(A_1)+1=\Delta(D)-1$. Furthermore, $E(H_1[B_1])\subseteq E(D[B_1])=\emptyset$.

We apply Lemma~\ref{lemma:main} once more. Now $H_1$ is our base demand graph, $F_2$ is the $\le 2$-factor to be lifted to elements of $B_1$, and we avoid lifting to elements of $B_2$. Let the resulting demand graph be $H_2$, whose vertex set is $V(D)\setminus X\setminus B_1$ of cardinality $n-6$. We have
\begin{align*}
  d_{H_2}(v)&\le\left\{
  \begin{array}{ll}
      d_{H_1}(v)-d_{F_2}(v)+1 & \text{ if }v\notin B_2, \\
      d_{H_1}(v)-d_{F_2}(v) & \text{ if }v\in B_2.
    \end{array}
    \right.\le \\
  &\le\left\{\begin{array}{ll}
      (\Delta(D)-1)-2+1 & \text{ if }v\notin B_2, \\
      (\Delta(D)-1)-1 & \text{ if }v\in B_2.
  \end{array}
  \right.\le \\
  &\le\Delta(D)-2=2\left\lfloor\frac{n-6}{6}\right\rfloor-4.
\end{align*}

By induction on $n$, we know that $H_2$ has a resolution, implying that $H_1$ has a resolution, which in turn implies that $D$ has a resolution.

%%%%%%%%%%%%%%%%%%%%%%%%%%%%%%%%%%%%%%%%%%%%%%%%%%%%%%%%%%%%%%%
%					Proof of Theorem alpha < 4 max.
%%%%%%%%%%%%%%%%%%%%%%%%%%%%%%%%%%%%%%%%%%%%%%%%%%%%%%%%%%%%%%%
\section{Proof of Theorem~\ref{alpha_2n-5}}
We prove our statement by induction on $n$. For $n\leq 4$ the statement is straightforward, the cases $n=5,6$ require a somewhat cumbersome casework. Note that if $n\geq 4$ we may assume $D$ has exactly $2n-5$ edges, otherwise we join two non-neighbors whose degree is smaller than $n-1$.

For the inductive step, we choose a vertex $x$, resolve all of its multiplicities, and delete it from the demand graph. There are two additional conditions to assert as the number of vertices decreases from $n$ to $n-1$:
\begin{itemize}
\item[i)] We need to delete at least 2 edges from $D$. These edges can be either already incident to $x$ or can be lifted to $x$.
\item[ii)] Let $B$ denote the set of vertices of degree greater than or equal to $n-1$. Obviously, to apply induction we need to decrease the degree $d(v)$ of every vertex $v\in B$ by at least one. Decreasing $d(v)$ can be performed by lifting an edge incident to $v$ to $x$. Note that this operation might create additional multiplicities that need to be resolved before the deletion of $x$.
\end{itemize}
In addition, observe that we can lift at least one edge to a vertex $v$ without its degree exceeding the degree bound for $n'=n-1$ if and only if $d(v)< n-2$. Let
\[B=\{z_1,\dots,z_{|B|}\}=\{v\in V(D): d(v)\geq n-2\}.\]
As $\sum\limits_{v\in V(D)}d(v) = 4n-10$, it follows that $|B|\leq 3$. We perform a casework on $|B|$.

\begin{itemize}
\item[$|B|=0$:] If $B$ is empty, then the only condition we need to guarantee is the deletion of at least two edges in $D$.  We have two cases.
\begin{itemize}
  \item \textit{If there is an $x\in V(D)$ with $\gamma(x)\ge 2$:} we have $n-1-\gamma(x)$ vertices for lifting to resolve the $d(x)-\gamma(x)$ multiplicities of $x$. Obviously, $d(x)-\gamma(x)\leq n-3 -\gamma(x)$ thus we have enough space to resolve all multiplicities of $x$. After the deletion of $x$, the graph has lost $\gamma(x)\ge 2$ edges, and the maximum degree is still two less then the number of vertices.
  \item \textit{If $\forall x\in V(D)$ we have $\gamma(x)\le 1$,} then $D$ is the disjoint union of bundles and isolated vertices, which is trivial to resolve.
\end{itemize}

\item[$|B|=1$:] We perform the same operation as in the previous case with the choice $x=z_1$. Observe that  our inequality becomes $d(z_1)-\gamma(z_1)\leq n - 1 -\gamma(z_1)$ thus we have enough vertices in the multigraph to perform all the necessary liftings.

\item[$|B|=2$:] Observe first that $z_1$ and $z_2$ are joined by an edge $e$ or else \[2n-5\geq d(B,V(D)-B)= d(z_1)+d(z_2)\geq 2n-4,\] a contradiction. Let us first assume that $z_1$ or $z_2$ (say, $z_1$) has an edge ending in a vertex different from $z_2$ (i.e. $d(B,V(D)-B)>0$). Observe that in this case $m(z_1)=d(z_1)-\gamma(z_1)\leq (n-1)-\gamma(z_1)$, thus all multiplicities of $z_1$ can be resolved by lifting the appropriate edges to $V(D)-\{z_1\}-\Gamma(z_1)$.

In the remaining case $z_1$ and $z_2$ form a bundle of at most $n-1$ edges. We can lift $n-2$ of these edges to $V(D)-B$ without difficulties, delete one of the vertices in $B$, and proceed by induction.

\item[$|B|=3$:] Observe that any two vertices of $\{z_1,z_2,z_3\}$ must be joined by and edge else the same reasoning as above leads to contradiction. Note also that a simple average degree calculation guarantees the existence of an isolated vertex $x$. We distinguish two cases:
\begin{itemize}
\item[i)] If $d(B,V(D)-B)=0$, we may assume that $V(D)-B$ contains an edge, otherwise $3(n-3)\geq 4n-10\Rightarrow n\leq 7$ and all edges are contained in $B$. For $n=5,6,7$ that leads to 4 possible demand graphs whose resolution can be easily completed; a case for $n=6$ is shown in Figure~\ref{fig1}.

\begin{figure}
	\centering
	\begin{subfigure}{.5\textwidth}
		\centering

			\begin{tikzpicture}
			\def \radius {1.5cm}

			\foreach \s in {1,2,3}
			{
				\node[draw, circle] (\s) at ({90+120 * \s}:\radius/2) {};
				\node[draw, circle] (O\s) at ({30+360/3 * \s}:1.4*\radius) {};
			}
			\foreach \x/\y/\c in {1/2/thin,2/1/dotted,1/3/dashed,3/1/thin,2/3/thin,3/2/dashdotted}
			\draw[\c] (\x) edge[bend left=15] (\y);

			\draw[thick] (1) edge (2);
      \draw[color=white] (O1) edge[bend left=15] (O3);
			\end{tikzpicture}

		\caption{Demand graph}\label{fig1:before}
	\end{subfigure}%
	\begin{subfigure}{.5\textwidth}
		\centering

			\begin{tikzpicture}

			\def \radius {1.5cm}

			\foreach \s in {1,2,3}
			{
				\node[draw, circle] (\s) at ({90+120 * \s}:\radius/2) {};
				\node[draw, circle] (O\s) at ({30+360/3 * \s}:1.4*\radius) {};
			}

   			\foreach \x/\y in {1/2,2/3,3/1}
				\draw[thin] (\x) edge[bend left=15] (\y);

			\foreach \x/\y/\z/\c in {1/2/O2/dotted,2/3/O3/dashdotted,3/1/O1/dashed}
			{
				\draw[\c] (\x) edge (\z);
				\draw[\c] (\y) edge (\z);
			}

			\draw[thick] (1) edge (O3);
			\draw[thick] (2) edge (O1);
			\draw[thick] (O1) edge[bend left=15] (O3);

			\end{tikzpicture}

		\caption{A solution}\label{fig1:after}
	\end{subfigure}
	\caption{}\label{fig1}
\end{figure}

Let $f$ denote an arbitrary edge in $V(D)-B$. We lift two edges of $B$ not belonging to the same pair as well as $f$ to $x$; observe that the degrees of all vertices in $B$ dropped by at least 1. As $n\geq 7$, the multiple edge created at vertex $x$ can be lifted to a vertex of $V(D)-B$ that was not incident to $f$.

\item[ii)] If $d(B,V(D)-B) > 0$ let $f$ be an edge between $B$ and $V(D)-B$. Without loss of generality we may assume $f$ is incident to $z_3$. We lift $f$ as well as an edge $e$ between $z_1$ and $z_2$; as $e$ and $f$ are disjoint, no new multiplicity is created, thus we can proceed by induction.
\end{itemize}
\end{itemize}

\section*{Acknowledgment}
We would like to heartfully thank Professor Jen\H o Lehel for drawing our attention to the above discussed problems. The third author in particular wishes to express his gratitude for the intriguing discussions about terminal-pairability problems.

The first and third authors of this current paper consider themselves especially fortunate to have worked with and have been inspired by the work of Professor Ralph Faudree. We would like to dedicate our paper to the memory of Professor Faudree as a humble contribution to one of his favorite research topics.

\bibliographystyle{acm}
\bibliography{refs}

\end{document}